\documentclass[11pt,a4paper,leqno]{amsart}

\usepackage[latin1]{inputenc}
\usepackage[T1]{fontenc}
\usepackage{amsfonts}
\usepackage{amsmath}
\usepackage{amssymb}
\usepackage{eurosym}
\usepackage{mathrsfs}
\usepackage{palatino}
\usepackage{color}
\usepackage{esint}
\usepackage{url}
\usepackage{verbatim}
\usepackage{graphicx}

\allowdisplaybreaks[4]

\usepackage{enumerate}

\usepackage[pagebackref,hypertexnames=false, colorlinks, citecolor=blue, linkcolor=blue, urlcolor=red]{hyperref}

\newcommand{\R}{\mathbb{R}}
\newcommand{\C}{\mathbb{C}}

\numberwithin{equation}{section}

\newcommand{\ud}[0]{\,\mathrm{d}}

\newcommand{\BMO}[0]{\operatorname{BMO}}

\newcommand{\sign}[0]{\operatorname{sgn}}

\newcommand{\eps}[0]{\varepsilon}

\newcommand{\wt}[1]{{\widetilde{#1}}}

\swapnumbers
\theoremstyle{plain}
\newtheorem{thm}[equation]{Theorem}
\newtheorem{lem}[equation]{Lemma}

\theoremstyle{definition}

\theoremstyle{remark}
\newtheorem{rem}[equation]{Remark}

\title{Curved commutators in higher dimensions}

 \author{Kangwei Li}
 \author{Yunan Zeng}

 \address[K.L.]{School of Mathematical Sciences, Zhejiang Normal University, Jinhua 321004, China}
 \email{kangwei.li@zjnu.edu.cn}
 \address[Y.Z.]{Center for Applied Mathematics, Tianjin University, Weijin Road 92, 300072 Tianjin, China}
 \email{zynn@tju.edu.cn}

\makeatletter
\@namedef{subjclassname@2020}{%
  \textup{2020} Mathematics Subject Classification}
\makeatother

\subjclass[2020]{42B20}
\keywords{}

\begin{document}

\allowdisplaybreaks

\begin{abstract}
We complete the characterization of the $L^p$ boundedness of the commutators $[b, H_\gamma]$
of pointwise multiplication and the Hilbert transform along monomial curves in any dimension. This 
fully solve a key point left open by Bongers,  Guo, Li and Wick, which was affirmatively answered recently only for dimension two. We achieve this by a novel but elementary argument, avoiding the complicated geometry structure in higher dimensions. 
\end{abstract}

\maketitle
%\tableofcontents
\section{Introduction}
We aim to complete the picture of characterizing the $L^p$ boundedness of the commutators $[b, H_\gamma]$
of pointwise multiplication and the Hilbert transform along monomial curves in any dimension. 
Here a monomial curve means 
\begin{equation*}
\gamma(t)= \begin{cases}
(\varepsilon_1 |t|^{\alpha_1}, \ldots,  \varepsilon_n |t|^{\alpha_n})\quad & \text{if $t\ge 0$}\\
(\varepsilon_1' |t|^{\alpha_1}, \ldots, \varepsilon_n'|t|^{\alpha_n})\quad & \text{if $t< 0$},
\end{cases}
\end{equation*}where $0<\alpha_1<\cdots<\alpha_n<\infty$ 
$, 
\varepsilon_i, \varepsilon_i'\in \{-1,1\}
$, and there exists $j\in\{1,\ldots,n\}$ such that $\varepsilon_j\neq \varepsilon_j'$. Then the Hilbert 
transform along $\gamma$ is defined as 
\[
H_\gamma f(x):= p.v. \int_{\mathbb R} f(x-\gamma(t)) \frac{\ud t}{t},\qquad x\in \R^n,
\]
and finally, the commutator $[b, H_\gamma]$ is formulated as
\[
[b, H_\gamma](f)(x)= b(x) H_\gamma f(x)- H_\gamma(bf)(x).
\]

The commutator $[b, H_\gamma]$ was first studied by Bongers,  Guo, Li and Wick in \cite{BGLW}, 
in which they proved that for $1<p<\infty$
\begin{equation}\label{eq:bglw}
\|[b, H_\gamma]\|_{L^p\to L^p}\le C \|b\|_{\BMO_\gamma},
\end{equation}
where 
\[
\|b\|_{\BMO_\gamma}:= \sup_{Q: \, \text{$\gamma$-cube}} \frac 1{|Q|}\int_Q |b-b_Q |,\qquad b_Q:=\frac 1{|Q|}\int_Q b.
\]
Here the supremum is taken over all $\gamma$-cubes, namely, $Q=I_1\times \ldots\times I_n$ with 
\[
\ell(I_1)^{1/{\alpha_1}}=\cdots= \ell(I_n)^{1/{\alpha_n}}.
\]
However, in the reverse direction to \eqref{eq:bglw}, they did not prove the necessity of $\BMO_\gamma$, and thus leaving out an open question. 

In \cite{O22} Oikari first attacked this problem for the parabolic Hilbert transform in dimension two. 
Here the parabolic Hilbert transform means $\gamma(t)=(t, t^2)$. Essentially the argument in \cite{O22} already shed light for general monomial curves in dimension two but assuming the curve to intersect adjacent quadrants of the plane. Such a restriction was removed recently in \cite{LMO} by Martikainen, Oikari and the first named author. The main idea for \cite{O22} and \cite{LMO} 
is a non-trivial adaptation of the approximate weak factorization argument, depending heavily on the geometry structure of the curve. Thus making the higher dimensional case quite challenging. 

The approximate weak factorization method was introduced by Hyt\"onen in \cite{Hyt18}. One of the reasons for him to introduce this technique is that the median method by Lerner, Ombrosi and Rivera-R\'ios \cite{LOR19} has the restriction that $b$ has to be real-valued. Both techniques are widely used in various commutator estimates, see e.g. \cite{ALM, AHLMO, LMV} and the references therein. However, in our setting both the approximate weak factorization and median methods will meet challenges.  We have explained this point above for the  approximate weak factorization method. On the other hand, it is already written in \cite{BGLW} that `\emph{classical techniques such as the median method break down due to the complicated geometry of the
operator}'. 

Our new approach is rather elementary. In fact, for a given $\gamma$-cube $Q$, unlike \cite{O22} and \cite{LMO}, we define its companion $\gamma$-cube $P$ in a uniform way. That is, we do not care the precise values of $\eps_j$ and  $\eps_j'$. On the other hand, we also adopt the merit of the median method. The difference is that instead of using the median of $b$ on $P$, we use directly $b(z)$ when $z$ belongs to some major subset of $P$.

Now we record the main result of this paper as follows.
\begin{thm}\label{thm:main}
Let $b\in L_{\rm{loc}}^1 (\R^n; \C)$ and $H_\gamma$ be the Hilbert transform along a monomial curve $\gamma$. Suppose that $[b, H_\gamma]$ is a bounded operator on $L^p$. Then $b\in \BMO_\gamma(\R^n)$ and moreover, 
\[
\|b\|_{\BMO_\gamma(\R^n)}\lesssim \|[b, H_\gamma]\|_{L^p\to L^p}.
\]
\end{thm}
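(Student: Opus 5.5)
Fix a $\gamma$-cube $Q=I_1\times\cdots\times I_n$ with side parameter $r$, so $\ell(I_j)=r^{\alpha_j}$, and let $c_Q$ be its center. The plan is to show
\[
\frac1{|Q|}\int_Q|b-c|\lesssim \|[b,H_\gamma]\|_{L^p\to L^p}
\]
for a suitable constant $c$. Since $b$ may be complex, we treat real and imaginary parts separately, or equivalently project $b-c$ onto a direction in $\C$ that captures a fixed fraction of $|b-c|$.

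\textbf{Step 1: the companion cube.} Choose a large constant $A$ depending only on $\gamma$. We look for the companion $\gamma$-cube $P$ among the translates $Q+\gamma(t)$ for $t\in[Ar,2Ar]$, or for $t\in[-2Ar,-Ar]$. The key geometric fact we need is that for $x\in Q$, $z\in P$ and $A$ large, any $t$ with $x-\gamma(t)=z$ satisfies $|t|\sim Ar$. We also need
\[
\bigl|\{t: x-\gamma(t)\in E\}\bigr|\gtrsim r|E|/|Q|
\]
for $E\subset P$. This uses the scaling $\gamma(\lambda t)=\delta_\lambda\gamma(t)$ with $\delta_\lambda={\rm diag}(\lambda^{\alpha_j})$, which reduces everything to $r=1$.

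Because $H_\gamma$ is one-dimensional along the curve, a pointwise lower bound is unavailable. So we instead test against functions on $P$ and average over $x\in Q$. Concretely, we let $P$ be a thickened tube: the union of the translates $Q-\gamma(t)$ for $t\in[A,2A]$, after rescaling to $r=1$. For $f=1_E$ with $E\subset P$, the map $x\mapsto H_\gamma(1_E)(x)$ then equals $\frac1{t}$ times the $t$-measure of the preimage. This has a fixed sign and size $\sim|\{t:x-\gamma(t)\in E\}|/A$. We do not need $\varepsilon_j\ne\varepsilon_j'$ here, since we only use $t>0$ and the kernel $1/t$ does not change sign there.

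\textbf{Step 2: a median-type choice.} Let $m$ be a (real-part) median of $b$ over $P$. Set $E_\pm=\{z\in P:\pm(\Re b(z)-m)\ge0\}$, each of measure at least $|P|/2$. For $x\in F_+:=\{x\in Q:\Re b(x)\ge m\}$ we write
\[
[b,H_\gamma](1_{E_-})(x)=\mathrm{p.v.}\int (b(x)-b(x-\gamma(t)))1_{E_-}(x-\gamma(t))\frac{\ud t}t .
\]
On the support of the integrand, the real part of $b(x)-b(x-\gamma(t))$ is at least $\Re b(x)-m\ge0$ and $1/t>0$. Hence
\[
|[b,H_\gamma]1_{E_-}(x)|\ge (\Re b(x)-m)\cdot\frac1A\,|\{t\in[A,2A]:x-\gamma(t)\in E_-\}|.
\]

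\textbf{Step 3: the main obstacle.} The hard part is that the fiber measure $|\{t:x-\gamma(t)\in E_-\}|$ need not be bounded below for every $x$, since $E_-$ is an arbitrary half of $P$. We handle this by integrating. By Fubini, using that $(x,t)\mapsto x-\gamma(t)$ covers $P$ with bounded multiplicity and Jacobian comparable to constants on $Q\times[A,2A]$,
\[
\int_Q|\{t:x-\gamma(t)\in E_-\}|\ud x\sim |E_-|\gtrsim |Q|A.
\]
So the fiber measure is $\gtrsim A$ on a subset $Q'\subset Q$ of proportional measure. This alone is not enough, because $\Re b-m$ could concentrate off $Q'$. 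To fix it, we instead choose $z$-dependent data in the spirit of the introduction. We reverse roles: take $m$ to be a median of $b$ on $Q$, and test with $f=1_{F_\pm}$ supported in $Q$, evaluating $[b,H_\gamma]1_{F_\pm}$ at points $z\in P$. We then use $b(z)$ for $z$ in a major subset $G\subset P$ where the fiber measure from $z$ into $F_\pm$ is large. Combining the two directions and using Hölder, we get
\[
\int_{F_+}(\Re b-\Re b(z))\ud x\lesssim \|[b,H_\gamma]\|\,|Q|
\]
for some good $z$. The same bound holds for $F_-$, and together they give the $\BMO_\gamma$ bound after taking imaginary parts analogously. The case where $P$ must use $t<0$ is symmetric.
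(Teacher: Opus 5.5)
Your proposal has a genuine gap. It sits exactly at what you call the main obstacle, and the ``fix'' in Step 3 does not close it.

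Everything you test is a single-curve quantity. $[b,H_\gamma]1_{E_-}(x)$ only sees $b$ on the arc $\{x-\gamma(t)\}$. Likewise, evaluating $[b,H_\gamma]1_{F_\pm}$ (or the adjoint commutator, depending on which side of $Q$ the set $P$ lies) at $z\in P$ only sees $b$ on the one arc from $z$ through $Q$.

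Reversing roles buys nothing. With your tube $P$, integrating the resulting lower bound over $z\in G\subset E_-$ gives, by Fubini,
\[
\int_{F_+}(\Re b(x)-m)\,\phi_G(x)\ud x,\qquad \phi_G(x):=\int 1_G(x-\gamma(t))\,\frac{\ud t}{t}.
\]
This is your Step 2 quantity with $E_-$ replaced by $G$, and $\phi_G$ still has no pointwise lower bound on $F_+$.

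This is not a technicality. Normalize $r=1$, let $S\subset Q$ be a ball of small radius $\delta$, and set $b=1$ on $S$ and on $P\cap\{x-\gamma(t):x\in S,\ t>0\}$, with $b=0$ elsewhere.
\begin{enumerate}
\item[(i)] The second set is a $\delta$-tube along the curve occupying a fraction $\lesssim\delta^{n-1}$ of $P$, so the medians of $b$ on $P$ and on $Q$ are both $0$.
\item[(ii)] Then $\phi_{E_-}=0$ on $S$, which is the only part of $Q$ where $\Re b>m$, while $m-\Re b=0$ on $F_-=Q\setminus S$. So the lower bounds produced by both of your pairings are $0$, although $\langle|b-b_Q|\rangle_Q\approx 2|S|/|Q|>0$.
\item[(iii)] For every $z\in E_-$ (more than half of $P$) the arc from $z$ misses $S$, so every commutator value at $z$ against a function supported in $Q$ vanishes. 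Yet $\int_{F_+}(\Re b-\Re b(z))\ud x=|S|$. Hence the inequality you claim ``for some good $z$'' in a major subset of $P$ cannot be extracted from such values.
\end{enumerate}

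There are also smaller issues.
\begin{enumerate}
\item[(a)] The fibre bound you list as needed in Step 1 is false pointwise.
\item[(b)] The map $(x,t)\mapsto x-\gamma(t)$ has one-dimensional fibres rather than bounded multiplicity.
\item[(c)] The sign claim in Step 2 does use that some $\varepsilon_j\neq\varepsilon_j'$: for an even curve the branch $t<0$ reaches $P$ as well and cancels.
\end{enumerate}

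The missing idea is a way to join one fixed $z$ to \emph{every} $x\in Q$ through a map with nondegenerate Jacobian. A single curve cannot provide this when $n\ge 2$; the paper uses $n$ consecutive curve steps instead.

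Take $P=Q+\gamma(a_1)+\cdots+\gamma(a_n)$ with $a_i=N^{i-1}A\ell(I)$. The paper's key lemma shows that for every $x\in Q$ and $z\in P$, the equation $x+\gamma(u_1)+\cdots+\gamma(u_n)=z$ has a unique solution with each $u_i$ within $2\ell(I)$ of $a_i$. The proof is a contraction argument in which the widely separated scales make the diagonal product dominate $\det M$.

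This turns $\int_Q(b(z)-b(x))\ud x$ into an integral over $u$ whose Jacobian is $C/(u_1\cdots u_n)$ up to an absorbable error. Telescoping $b(z)-b(x)$ along the $n$ arcs then writes each piece as $[b,H_\gamma]$ applied to a translate of $1_Q$, or of $g=(\sign(b-b_Q)-\langle\sign(b-b_Q)\rangle_Q)1_Q$, evaluated at a translated point. One then takes the $L^p$ norm over a subset of $P$ of measure at least $|P|/2$.

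Instead of a median, the paper splits according to whether $E=\{z\in P:|b(z)-b_Q|\ge\langle|b-b_Q|\rangle_Q\}$ satisfies $|E|\ge|P|/2$. This also handles complex $b$ without separating real and imaginary parts.
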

Corollaries such as the factorization of Hardy spaces, which has applications in PDEs via compensated compactness and div-curl lemmas \cite{CLMS, Lind} as well as the Jacobian problem \cite{Hyt18}, are standard (see e.g. \cite{CRW, O22}) and hence we omit the statements here.

\vspace{0.3cm}

\noindent \textbf{Acknowledgements.}
This work is supported by the National Natural Science Foundation of China through
project numbers 12671125 and 12222114. 

\section{Preliminaries}
To simplify the notation (and also computations) we focus on three dimensions, it will be clear that our strategy can be extended to higher dimensions easily. Thus from now on we consider 
\begin{equation*}
\gamma(t)= \begin{cases}
(\varepsilon_1 |t|^{\alpha_1}, \varepsilon_2 |t|^{\alpha_2},  \varepsilon_3 |t|^{\alpha_3})\quad & \text{if $t\ge 0$}\\
(\varepsilon_1' |t|^{\alpha_1}, \varepsilon_2' |t|^{\alpha_2}, \varepsilon_3'|t|^{\alpha_3})\quad & \text{if $t< 0$},
\end{cases}
\end{equation*}where $0<\alpha_1<\alpha_2<\alpha_3<\infty$ 
$, 
\varepsilon_i, \varepsilon_i'\in \{-1,1\}
$, and there exists $j\in\{1,2,3\}$ such that $\varepsilon_j\neq \varepsilon_j'$. 
Easy change of variable argument (see e.g. \cite{LMO}) allows us to assume that $\alpha_1=1$. So in below we shall work with $(\alpha_1, \alpha_2, \alpha_3):=(1, \beta_1, \beta_2)$ with $1<\beta_1<\beta_2<\infty$. On the other hand, 
recall that in \cite{LMO} and \cite{O22}, the geometry of the curve plays the key role. In particular, different signs $\eps_j, \eps_j'$ in $H_\gamma$ lead to different approximate weak factorization argument. However, in this paper, we find this is actually not necessary. 

Given a $\gamma$-cube $Q$, namely, $Q=I\times J\times K$ with $\ell(J)=\ell(I)^{\beta_1}$ and $\ell(K)=\ell(I)^{\beta_2}$,  we define 
\begin{equation}\label{eq:e1}
P:= Q+  \gamma(A\ell(I))+  \gamma(NA\ell(I))+  \gamma(N^2A \ell(I)),
\end{equation}
where $A, N$ are sufficiently large which will be chosen later.
Note that in this definition we only used the `positive part' of the curve (i.e. $t>0$), and it turns out that this is enough in the lower bound of the commutator.  The key result of this section is the following
\begin{lem}\label{lem:key}
Let $Q$ be a $\gamma$-cube and $P$ be defined as in \eqref{eq:e1} with sufficiently large $A, N$. Then  for any $x\in Q$ and $z\in P$, there is a unique triple $(u_1, u_2, u_3)$ with $$u_i\in ((N^{i-1}A -2)\ell(I), (N^{i-1}A +2)\ell(I))=:I_A^i$$ such that $x+  \gamma(u_1)+  \gamma(u_2)+  \gamma(u_3)=z$. 
\end{lem}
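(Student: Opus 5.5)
By the homogeneity of $\gamma$ the lemma reduces to a quantitative inverse function theorem for the map $F(u_1,u_2,u_3):=\gamma(u_1)+\gamma(u_2)+\gamma(u_3)$ on the box $I_A^1\times I_A^2\times I_A^3$. The plan is to normalise, show that the inverse Jacobian is nearly triangular with a near-identity first coordinate, and then close with a contraction argument.

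\textbf{Step 1: normalisation.} Let $\delta_\lambda(y)=(\lambda y_1,\lambda^{\beta_1}y_2,\lambda^{\beta_2}y_3)$. For $t>0$ we have $\gamma(\lambda t)=\delta_\lambda\gamma(t)$, and $\delta_\lambda$ maps $\gamma$-cubes of side $\ell(I)$ to $\gamma$-cubes of side $\lambda\ell(I)$. Taking $\lambda=\ell(I)^{-1}$ we may therefore assume $\ell(I)=1$. All $u_i$ in question are positive, so only the signs $\varepsilon_1,\varepsilon_2,\varepsilon_3$ enter. These are coordinate reflections, which preserve the box structure, so we may take $\varepsilon_i=1$. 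Thus
\[
F(u)=\Bigl(\sum_i u_i,\ \sum_i u_i^{\beta_1},\ \sum_i u_i^{\beta_2}\Bigr).
\]
Put $a=(A,NA,N^2A)$. Since $z-x\in P-Q=F(a)+(Q-Q)$ and $Q-Q=(-1,1)^3$ after normalisation, it suffices to prove the following: for every $w\in F(a)+(-1,1)^3$ there is exactly one $u$ with $|u_i-a_i|<2$ and $F(u)=w$.

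\textbf{Step 2: the linearisation.} Solve $DF(u)\,du=dw$ by back-substitution, where the columns of $DF(u)$ are $(1,\beta_1u_j^{\beta_1-1},\beta_2u_j^{\beta_2-1})^T$.
\begin{itemize}
\item The third equation is dominated by $\beta_2u_3^{\beta_2-1}du_3$, because $u_3\approx N^2A\gg u_2\approx NA\gg u_1\approx A$. This gives $|du_3|\lesssim u_3^{1-\beta_2}(|dw_3|+\text{small})$.
\item The second equation then gives $du_2\approx (dw_2-\beta_1u_3^{\beta_1-1}du_3)/(\beta_1u_2^{\beta_1-1})$. This is $O(A^{1-\beta_1})$, since $u_3^{\beta_1-\beta_2}$ is tiny.
\item The first equation gives $du_1=dw_1-du_2-du_3=dw_1+o(1)$.
\end{itemize}
Concretely, I would show that for every $u$ in the doubled box,
\[
DF(u)^{-1}=\begin{pmatrix}1&0&0\\0&0&0\\0&0&0\end{pmatrix}+E(u),\qquad \|E(u)\|\le \eta(A,N)\to0
\]
as $A,N\to\infty$. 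This can be checked either by the above elimination or by Cramer's rule with the generalized Vandermonde determinant, whose leading term is $\beta_1\beta_2u_2^{\beta_1-1}u_3^{\beta_2-1}$ and whose other terms are smaller by negative powers of $N$. In particular $DF(a)^{-1}$ maps $(-1,1)^3$ into $(-1-\eta,1+\eta)\times(-\eta,\eta)^2$, strictly inside $(-2,2)^3$. This is exactly why the width $2$ in $I_A^i$ leaves room over the width $1$ of $Q-Q$.

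\textbf{Step 3: existence and uniqueness.} Write
\[
F(u)=F(a)+DF(a)(u-a)+R(u),\qquad |R_k(u)|\lesssim \max_j a_j^{\beta_{k}-2}
\]
on the box, with the conventions $\beta_0=1$ (so $R_1\equiv 0$). Consider the map
\[
T(u)=a+DF(a)^{-1}\bigl(w-F(a)-R(u)\bigr).
\]
I would show that $T$ maps the closed box $\{|u_i-a_i|\le 2-\kappa\}$ into itself, for some small $\kappa>0$, and is a contraction there. The Lipschitz constant of $T$ is $\|DF(a)^{-1}(DF(u)-DF(a))\|$. Row by row, $DF(u)-DF(a)$ has size $\lesssim a_j^{\beta_k-2}$, while the corresponding pivot of $DF(a)$ has size $a_j^{\beta_k-1}$; so after multiplication by $DF(a)^{-1}$ the Lipschitz constant is $O(A^{-1})$. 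Banach's fixed point theorem then gives existence.

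For uniqueness on the open box, use the mean value form $F(u)-F(v)=\bigl(\int_0^1DF(v+s(u-v))\,ds\bigr)(u-v)$. The averaged matrix is a small perturbation of $DF(a)$, hence invertible, so $F$ is injective on the convex box.

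\textbf{Main obstacle.} The delicate point is the uniform bookkeeping in Steps 2 and 3. One must check that the off-diagonal couplings $u_3^{\beta_1-1}/u_2^{\beta_1-1}$, the ratios $u_2^{\beta_2-1}/u_3^{\beta_2-1}$, and the curvature remainders are all small at once, using $N$ large for the scale separation and then $A$ large for the curvature. I would organise this by conjugating $DF$ with the diagonal scaling $\mathrm{diag}(1,a_2^{1-\beta_1},a_3^{1-\beta_2})$ on rows, which turns it into a matrix close to an explicit unipotent-times-diagonal matrix.
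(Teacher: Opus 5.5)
Your proposal is correct and follows essentially the same route as the paper. The paper also linearizes at $a=(A,NA,N^2A)\ell(I)$ with the matrix $M=DF(a)$, moves the Taylor remainders $h_2,h_3$ to the right-hand side, and applies the contraction mapping principle to $T(v)=M^{-1}q(v)$ on $[-2\ell(I),2\ell(I)]^3$, using that $\det M$ is dominated by $\beta_1\beta_2 a_2^{\beta_1-1}a_3^{\beta_2-1}$ once $N$ is large and that the remainder contributions carry an extra factor of order $A^{-1}$; your normalisation of $\ell(I)$ and the signs $\varepsilon_i$, and your margin $\kappa$ placing the solution in the open intervals $I_A^i$, are only cosmetic refinements of that argument.
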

\begin{proof}
For any $x\in Q=I\times J\times K$, set $$x+ \gamma(A\ell(I))+   \gamma(NA \ell(I))+  \gamma(N^2 A\ell(I))=:x'\in P.$$
For simplicity we denote $a_j:=N^{j-1}A  \ell(I)$. 
Note that then $x+  \gamma(u_1)+  \gamma(u_2)+  \gamma(u_3)=z$ holds   if and only if 
\[
z-x'=   (\gamma(u_1)- \gamma(a_1))+  (\gamma(u_2)-\gamma(a_2))+  (\gamma(u_3)- \gamma(a_3)).
\]
Now let us restrict the equation to the region $u_j \in I_A^j$ and 
set $v_j= u_j-a_j\in (-2\ell(I), 2\ell(I))$, componentwise this and using Taylor's formula we have 
\begin{equation}
\left\{
\begin{aligned}
		z_1-x_1' &=\eps_1 v_1+ \eps_1 v_2+ \eps_1 v_3,  \\
		z_2-x_2'&= \eps_2 \sum_{j=1}^3 \Big(\beta_1 a_j^{\beta_1-1}  v_j +
		\int_{a_j}^{a_j+v_j} (a_j+v_j-t)  \beta_1 (\beta_1-1) t^{\beta_1-2}\ud t\Big),  \\
		z_3-x_3'&=\eps_3 \sum_{j=1}^3 \Big(\beta_2 a_j^{\beta_2-1}  v_j +\int_{a_j}^{a_j+v_j} (a_j+v_j-t)  \beta_2 (\beta_2-1) t^{\beta_2-2}\ud t\Big). 
		\end{aligned}
		\right.
	\end{equation}
Now define 
	\[
	\renewcommand{\arraystretch}{1.5}
	M:=
	\begin{pmatrix}
	1& 1& 1\\
	\beta_1 a_1^{\beta_1-1}  & \beta_1 a_2^{\beta_1-1} &\beta_1 a_3^{\beta_1-1} \\
	 \beta_2 a_1^{\beta_2-1}  & \beta_2 a_2^{\beta_2-1} &\beta_2 a_3^{\beta_2-1}
	\end{pmatrix}, \qquad 
	 v:= 
	\begin{pmatrix}
	v_1\\
	v_2\\
	v_3
	\end{pmatrix}.
	\]
and 
	$
	T(  v ):= M^{-1}(q(v )), 
$
	where 
	\[
	\renewcommand{\arraystretch}{1.5}
	q(v ):= 
	\begin{pmatrix}
	\eps_1^{-1} (z_1-x_1')\\
	\eps_2^{-1} (z_2-x_2') -h_2(v )\\
	\eps_3^{-1} (z_3-x_3') -h_3(v )
	\end{pmatrix}
	=:
	\begin{pmatrix}
	q_1\\
	q_2\\
	q_3
	\end{pmatrix}
	\]
	with 
	\[
	h_j(v ):= \sum_{i=1}^3\int_{a_i}^{a_i+v_i} (a_i+v_i-t)  \beta_{j-1} (\beta_{j-1}-1) t^{\beta_{j-1}-2}\ud t,\qquad j=2,3.
	\]
	The problem is then reduced to proving that there exists a unique $v $ with $|v_i|\le 2\ell(I)$
	such that $Tv =v $. Direct computation gives that the inverse matrix $M^{-1}$ equals
	\[
	\renewcommand{\arraystretch}{1.5}
	\frac 1{\det M}\begin{pmatrix}
	\beta_1\beta_2 (a_2^{\beta_1-1}a_3^{\beta_2-1}- a_3^{\beta_1-1}a_2^{\beta_2-1}) & \beta_2 (a_2^{\beta_2-1}-a_3^{\beta_2-1})& \beta_1 (a_3^{\beta_1-1}-a_2^{\beta_1-1})\\
	\beta_1\beta_2 (a_3^{\beta_1-1}a_1^{\beta_2-1}- a_1^{\beta_1-1}a_3^{\beta_2-1}) & \beta_2 (a_3^{\beta_2-1}-a_1^{\beta_2-1})& \beta_1 (a_1^{\beta_1-1}-a_3^{\beta_1-1})  \\
	\beta_1\beta_2 (a_1^{\beta_1-1}a_2^{\beta_2-1}- a_2^{\beta_1-1}a_1^{\beta_2-1}) & \beta_2 (a_1^{\beta_2-1}-a_2^{\beta_2-1})& \beta_1 (a_2^{\beta_1-1}-a_1^{\beta_1-1}) 
	\end{pmatrix},
	\]
	where 
	\begin{align*}
	\det M&=\beta_1 \beta_2 \big((a_2^{\beta_1-1}a_3^{\beta_2-1}-a_2^{\beta_2-1}a_3^{\beta_1-1})- (a_1^{\beta_1-1}a_3^{\beta_2-1}-a_1^{\beta_2-1}a_3^{\beta_1-1})\\
	&\hspace{4cm}+
	(a_1^{\beta_1-1}a_2^{\beta_2-1}-a_1^{\beta_2-1}a_2^{\beta_1-1})\big). 
	\end{align*}
Let  $N$ be sufficiently large (depending only on $\beta_1, \beta_2$), we may simply estimate as 
\begin{align*}
\det M \ge \beta_1 \beta_2 \bigg(a_2^{\beta_1-1}a_3^{\beta_2-1}- \sum_{\substack{1\le i, j\le 3\\
i\ne j\\ (i,j)\ne (2,3)}} a_i^{\beta_1-1}a_j^{\beta_2-1}\bigg)\ge \frac {9}{10}\beta_1 \beta_2a_2^{\beta_1-1}a_3^{\beta_2-1}.
\end{align*}
On the other hand, for $v$ with $|v_i|\le 2\ell(I)$, we have  $|q_1|\le \ell(I)$, 
\[
|q_2|\le \ell(I)^{\beta_1}(1+  12\beta_1(\beta_1-1)  \max\{ (N^2 A +2)^{\beta_1-2}, (A-2)^{\beta_1-2}\})
\]
and
\[
|q_3|\le \ell(I)^{\beta_2}(1 +12\beta_2(\beta_2-1)  \max\{ (N^2 A +2)^{\beta_2-2}, (A-2)^{\beta_2-2}\}),
\]
where e.g. we controlled the integral in $q_2$ as 
\begin{align*}
&\Big|\int_{a_j}^{a_j+v_j} (a_j+v_j-t)  \beta_1 (\beta_1-1) t^{\beta_1-2}\ud t\Big|\\
&\le\beta_1 (\beta_1-1) \Big|\int_{a_j}^{a_j+v_j} |v_j|  \max\{ (N^2 A +2)^{\beta_1-2}, (A-2)^{\beta_1-2}\} \ell(I)^{\beta_1-2}\ud t\Big|\\
&\le 4\beta_1 (\beta_1-1)\ell(I)^{\beta_1}\max\{ (N^2 A +2)^{\beta_1-2}, (A-2)^{\beta_1-2}\}.
\end{align*}
It follows that if we take $N$ as above and let $A$ be sufficiently large then 
\begin{align*}
|(T(v))_i|\le 2\ell(I),\qquad 1\le i\le3,
\end{align*}where $(T(v))_i$ stands for the $i$-th coordinate of $T(v)$.
This means that the map $T$ sends $[-2\ell(I), 2\ell(I)]^3$ into $[-2\ell(I), 2\ell(I)]^3$. Moreover, for 
$v, \wt v\in [-2\ell(I), 2\ell(I)]^3$, since 
\begin{align*}
T(v)-T(\wt v)= M^{-1} \begin{pmatrix}
	0\\
	h_2(\wt v) -h_2(v )\\
	h_3(\wt v) -h_3(v )
	\end{pmatrix}
\end{align*}
and 
\begin{align*}
 h_j(\wt v) -h_j(v ) 
&=\sum_{i=1}^3\Big(\int_{a_i}^{a_i+\wt v_j} (\wt v_i-v_i) \beta_{j-1} (\beta_{j-1}-1) t^{\beta_{j-1}-2}\ud t\\
&\qquad+  \int_{a_i+v_i}^{a_i+\wt v_i} (a_i+v_i-t) \beta_{j-1} (\beta_{j-1}-1) t^{\beta_{j-1}-2}\ud t\Big)
\end{align*}
then similar as above, when taking $N$ and $A$ to be sufficiently large, we have  
\[
|T(v)-T(\wt v)|\le \frac 12 |v-\wt v|.
\]
Here what is crucial is that, the power of $A$ for the downstairs (i.e. $\det M$) is $A^{\beta_1+\beta_2-2}$, while for the upstairs it is $A^{\beta_1+\beta_2-3}$. The proof is complete by the compressed mapping principle. 
\end{proof}

\begin{rem}
We remark that it is clear our proof for Lemma \ref{lem:key}  can be easily extended to higher dimensions. The key is that the product of the diagonal of the matrix is the major term of its determinant. 
\end{rem}

\section{Proof of the main result}
 Now we are ready to prove  Theorem \ref{thm:main}. Let $Q$ be a fixed $\gamma$-cube, and set $P$ as in \eqref{eq:e1}. Then we may define the exceptional set 
 \[
 E:=\big\{z\in P: |b(z)-b_Q|\ge \langle |b-b_Q|\rangle_Q\big\},\qquad b_Q:=\langle b\rangle_Q.
 \]
 We shall split the argument into two cases: $|E|\ge |P|/2$ or $|E|<|P|/2$.
 \subsection{The case $|E|\ge |P|/2$}
This is the nicer case since we already controlled the mean oscillation by $|b(z)-b_Q|$ on a major subset of $P$.
Note that we can write 
\[
|b(z)-b_Q|= \frac 1{|Q|}\Big| \int_Q (b(z)-b(x))\ud x\Big|.
\]
The problem is then reduced to relating the integral on the right hand side with the Hilbert transform along curves. Fix $z\in E$,
 having Lemma \ref{lem:key} at hand, there is a unique $(u_1, u_2, u_3)$ such that 
 \[
 x=z-\gamma(u_1)-\gamma(u_2)-\gamma(u_3).
 \]
 Then formally we may write 
  \begin{align*}
 b(z)-b(x)&=\big[b(z)-b(z-\gamma(u_3))\big]+ \big[b(z-\gamma(u_3))- b(z-\gamma(u_2)-\gamma(u_3))\big]\\
 &\qquad+\big[b(z- \gamma(u_2)-\gamma(u_3))-b(z-\gamma(u_1)-\gamma(u_2)-\gamma(u_3))\big].
 \end{align*}
Now we have seen the commutator structure already appears, so then it is natural to perform the change of variable argument. To proceed, 
let 
\[
U:= \big\{(u_1, u_2, u_3): u_i\in I_A^i\big\}.
\]
 We may define 
\[
D_z:=\big\{(u_1, u_2, u_3)\in U: z-\gamma(u_1)-\gamma(u_2)-\gamma(u_3)\in Q\}. 
\]
Then clearly we have 
\begin{equation}\label{eq:trans}
\int_Q (b(z)-b(x))\ud x= 
\int_{D_z}\big(b(z)-b(z-\gamma(u_1)-\gamma(u_2)-\gamma(u_3))\big)|\det J(u)| \ud u_1 \ud u_2 \ud u_3,
\end{equation}
where $J(u)$ is the Jacobian matrix 
\[
	\renewcommand{\arraystretch}{1.5}
	J(u):=
	-\begin{pmatrix}
	\eps_1 & \eps_1& \eps_1\\
	\eps_2\beta_1 u_1^{\beta_1-1}  & \eps_2\beta_1 u_2^{\beta_1-1} &\eps_2\beta_1 u_3^{\beta_1-1} \\
	\eps_3 \beta_2 u_1^{\beta_2-1}  & \eps_3\beta_2 u_2^{\beta_2-1} &\eps_3\beta_2 u_3^{\beta_2-1}
	\end{pmatrix}.
	\]
	Now write  
	\begin{align}\label{eq:e100}
	\begin{split}
	|\det J(u)|&= N^3A^3 \ell(I)^3|\det J(A\ell(I), NA\ell(I), N^2 A\ell(I))|\\
	&\qquad \cdot\frac{|\det J(u)|u_1 u_2 u_3}{ N^3A^3 \ell(I)^3|\det J(A\ell(I), NA\ell(I), N^2 A\ell(I))|}\cdot \frac 1{u_1 u_2 u_3}\\
	&= N^3A^3 \ell(I)^3|\det J(A\ell(I), NA\ell(I), N^2 A\ell(I))|\cdot \bigg( \frac 1{u_1 u_2 u_3}\\
	&\qquad+ \Big(\frac{|\det J(u)|u_1 u_2 u_3}{ N^3A^3 \ell(I)^3|\det J(A\ell(I), NA\ell(I), N^2 A\ell(I))|}-1\Big)\frac 1{u_1 u_2 u_3}\bigg).
	\end{split}
	\end{align}
	For simplicity we denote $$C(N,A,\ell(I))= N^3A^3 \ell(I)^3|\det J(A\ell(I), NA\ell(I), N^2 A\ell(I))|.$$
	Then substitute \eqref{eq:e100} into \eqref{eq:trans}, we see that LHS of \eqref{eq:trans} can be written as the sum of two terms, one of which is
	\begin{align*}
&C(N,A,\ell(I))\int_{D_z}\big(b(z)-b(z-\gamma(u_1)-\gamma(u_2)-\gamma(u_3))\big)\frac{\ud u_1\ud u_2 \ud u_3}{u_1 u_2 u_3}\\
&=C(N,A,\ell(I))\int_{D_z}\big(b(z)-b(z- \gamma(u_3))\big)\frac{\ud u_1\ud u_2 \ud u_3}{u_1 u_2 u_3}\\
&\quad+ C(N,A,\ell(I))\int_{D_z}\big(b(z-\gamma(u_3))-b(z-\gamma(u_2)- \gamma(u_3))\big)\frac{\ud u_1\ud u_2 \ud u_3}{u_1 u_2 u_3}\\
&\quad+C(N,A,\ell(I))\int_{D_z}\big(b(z-\gamma(u_2)- \gamma(u_3))-b(z-\gamma(u_1)-\gamma(u_2)-\gamma(u_3))\big)\frac{\ud u_1\ud u_2 \ud u_3}{u_1 u_2 u_3}\\
&=: I_1(z)+I_2(z)+I_3(z).
	\end{align*}
	The other term is 
	\begin{align*}
	 &C(N,A,\ell(I))\int_{D_z}\big(b(z)-b(z-\gamma(u_1)-\gamma(u_2)-\gamma(u_3))\big)  \Big(\frac{\det J(u) u_1u_2u_3}{C(N,A,\ell(I))}-1\Big)\frac{\ud u_1\ud u_2 \ud u_3}{u_1 u_2 u_3}\\
	 &=: I_4(z).
	\end{align*}
We deal with $I_4(z)$ first. The key observation is that when $(u_1, u_2, u_3)\in D_z$, we have the following
\[
\lim_{A\to \infty}\sup_{u\in D_z}\left|\frac{\det J(u) u_1u_2u_3}{C(N,A,\ell(I))}-1\right|\cdot \frac{C(N,A,\ell(I))}{|\det J(u)|u_1u_2u_3}=0,
\]
where we have fixed $N$ to be sufficiently large (depending only on $\beta_1, \beta_2$). Then we may choose $A$ to be sufficiently large so that 
\begin{equation}\label{eq:eq103}
\sup_{u\in D_z}\left|\frac{\det J(u) u_1u_2u_3}{C(N,A,\ell(I))}-1\right|\cdot \frac{|C(N,A,\ell(I))|}{|\det J(u)|u_1u_2u_3}< \frac 18
\end{equation}
and we have
\begin{align*}
|I_4(z)|&\le \frac 18 \int_{D_z}| b(z)-b(z-\gamma(u_1)-\gamma(u_2)-\gamma(u_3))| \cdot |\det J(u)| \ud u_1 \ud u_2 \ud u_3\\
&= \frac 18\int_Q |b(z)-b(x)|\ud x
\le \frac 18 |b(z)- b_Q| |Q|+ \frac 18\langle | b-b_Q|\rangle_Q |Q|\\
&\le \frac 12 |b(z)- b_Q| |Q|,
\end{align*}
where in the last step we have used the definition of $E$. As a summary of the above arguments, we have arrived at 
\begin{equation}\label{eq:eq101}
|b(z)- b_Q|  \le \frac 2{|Q|} (I_1(z)+I_2(z)+I_3(z)),\qquad z\in E.
\end{equation}
Next we move to estimate $I_1, I_2$ and $I_3$, these terms will actually be estimated in a similar way. Indeed, we have 
\begin{align*}
I_1(z)&= C(N,A,\ell(I))\int_{I_A^1}\int_{I_A^2}\int \big(b(z)-b(z- \gamma(u_3))\big)1_{Q+\gamma(u_1)+\gamma(u_2)}(z-\gamma(u_3))\frac{ \ud u_3\ud u_1\ud u_2}{u_3u_1 u_2 }\\
&= C(N,A,\ell(I))\int_{I_A^1}\int_{I_A^2}[b, H_\gamma](1_{Q+\gamma(u_1)+\gamma(u_2)} )(z)\frac{\ud u_1 \ud u_2}{u_1 u_2}.
\end{align*}
Similarly, 
\begin{align*}
I_2(z)&=C(N,A,\ell(I))\int_{I_A^1}\int_{I_A^3}[b, H_\gamma](1_{Q+\gamma(u_1)} )(z-\gamma(u_3))\frac{\ud u_1 \ud u_3}{u_1 u_3},\\
I_3(z)&=C(N,A,\ell(I))\int_{I_A^2}\int_{I_A^3}[b, H_\gamma](1_{Q} )(z-\gamma(u_2)-\gamma(u_3))\frac{\ud u_2 \ud u_3}{u_2 u_3}.
\end{align*}
Taking the $L^p(E)$ norm on both sides of \eqref{eq:eq101}, and applying the Minkowski's inequality we get 
\begin{align*}
\langle |b-b_Q|\rangle_Q & |E|^{1/p}\le \|b-b_Q\|_{L^p(E)}\\
&\le \frac2{|Q|}C(N,A,\ell(I))\bigg(\int_{I_A^1}\int_{I_A^2}\| [b, H_\gamma](1_{Q+\gamma(u_1)+\gamma(u_2)} )\|_{L^p} \frac{\ud u_1 \ud u_2}{u_1 u_2}\\
& + \int_{I_A^1}\int_{I_A^3}\| [b, H_\gamma](1_{Q+\gamma(u_1)} )\|_{L^p}\frac{\ud u_1 \ud u_3}{u_1 u_3} +\int_{I_A^2}\int_{I_A^3}\| [b, H_\gamma](1_{Q} )\|_{L^p}\frac{\ud u_2 \ud u_3}{u_2 u_3}\bigg)\\
&\lesssim_{A, \beta_1, \beta_2}  \| [b, H_\gamma]\|_{L^p\to L^p}|Q|^{1/p}.
\end{align*}
Hence 
\[
\langle |b-b_Q|\rangle_Q \lesssim_{A, \beta_1, \beta_2}  \| [b, H_\gamma]\|_{L^p\to L^p}.
\]
This completes the proof of the case $|E|\ge |P|/2$.
\subsection{The case $|E|< |P|/2$}
In this case we have $|P\setminus E|>|P|/2$, where 
\[
P\setminus E=\big\{z\in P: |b(z)-b_Q|< \langle |b-b_Q|\rangle_Q\big\}.
\]
Then we may write 
\begin{equation}\label{eq:eq104}
\begin{split}
 \langle |b-b_Q|\rangle_Q&= \frac 1{|Q|}\int_Q (b(x)-b_Q) \sign (b(x)-b_Q)\\
 &= \frac 1{|Q|}\int_Q (b(x)-b_Q) (\sign (b(x)-b_Q)- \langle \sign (b(x)-b_Q)\rangle_Q )\\
 &=  \frac 1{|Q|}\int_Q  (b(x)-b(z)) (\sign (b(x)-b_Q)- \langle \sign (b(x)-b_Q)\rangle_Q )\\
 &=: \frac 1{|Q|}\int_Q  (b(x)-b(z)) g(x) \ud x,
 \end{split}
\end{equation}
where $z\in P\setminus E$ and we have used the cancellation property of $(b-b_Q)1_Q$ and 
\[
g(x)= (\sign (b(x)-b_Q)- \langle \sign (b(x)-b_Q)\rangle_Q )1_Q(x).
\]
Now we are in a similar situation as the previous case, the difference is instead of $1_Q$ we have $g(x)$ in the above.
Thus, as before we write 
\begin{equation*} 
\begin{split}
&\int_Q (b(x)-b(z))g(x)\ud x\\
&= 
\int_{D_z}\big(b(z-\gamma(u_1)-\gamma(u_2)-\gamma(u_3))-b(z)\big)    g(z-\gamma(u_1)-\gamma(u_2)-\gamma(u_3))|\det J(u)| \ud u \\
&= C(N,A,\ell(I))\int_{D_z}\big(b(z-\gamma(u_1)-\gamma(u_2)-\gamma(u_3))-b(z)\big)\\
&\hspace{3cm}   g(z-\gamma(u_1)-\gamma(u_2)-\gamma(u_3)) \frac{\ud u_1 \ud u_2 \ud u_3}{u_1u_2u_3}\\
&\quad+C(N,A,\ell(I))\int_{D_z}\big(b(z-\gamma(u_1)-\gamma(u_2)-\gamma(u_3))-b(z)\big)\\
&\hspace{3cm}   g(z-\gamma(u_1)-\gamma(u_2)-\gamma(u_3) ) \Big(\frac{\det J(u) u_1u_2u_3}{C(N,A,\ell(I))}-1\Big)\frac{\ud u_1 \ud u_2 \ud u_3}{u_1u_2u_3}\\
&=:II_1(z)+II_2(z).
\end{split}
\end{equation*}
The term $II_1(z)$ will be handled exactly in the same way as the previous case. Indeed, in the same way we can write 
\begin{align*}
II_1&=-  C(N,A,\ell(I))\bigg( \int_{I_A^1}\int_{I_A^2}[b, H_\gamma](  g(\cdot- \gamma(u_1)-\gamma(u_2)))(z)\frac{\ud u_1 \ud u_2}{u_1 u_2}\\
&\hspace{2cm}+ \int_{I_A^1}\int_{I_A^3}[b, H_\gamma](g(\cdot- \gamma(u_1)) )(z-\gamma(u_3))\frac{\ud u_1 \ud u_3}{u_1 u_3}\\
&\hspace{3cm}+\int_{I_A^2}\int_{I_A^3}[b, H_\gamma](g )(z-\gamma(u_2)-\gamma(u_3))\frac{\ud u_2 \ud u_3}{u_2 u_3}.
\end{align*}
Turn to the term $II_2(z)$, again by taking $A$ to be large enough so that \eqref{eq:eq103} holds, then since 
\[
|g(z-\gamma(u_1)-\gamma(u_2)-\gamma(u_3))|\le 2
\]we have 
\begin{align*}
|II_2|&\le \frac 14\int_{D_z}| b(z)-b(z-\gamma(u_1)-\gamma(u_2)-\gamma(u_3))| \cdot |\det J(u)| \ud u_1 \ud u_2 \ud u_3\\
&= \frac 14\int_Q |b(z)-b(x)|\ud x
\le \frac 14 |b(z)- b_Q| |Q|+ \frac 14\langle | b-b_Q|\rangle_Q |Q|\\
&\le \frac 12\langle | b-b_Q|\rangle_Q |Q|.
\end{align*}
Substitute the above estimates into \eqref{eq:eq104} we obtain 
\[
\langle | b-b_Q|\rangle_Q\le \frac 2{|Q|} |II_1(z)|.
\]
The proof is completed if we take the $L^p(P\setminus E)$ norm on both sides.


\begin{thebibliography}{00}
  \bibitem{ALM}
 E. Airta, K. Li, H. Martikainen, Zygmund dilations: bilinear analysis and commutator estimates. Trans. Amer. Math. Soc. 378 (2025), no. 7, 4581--4625.
  
  \bibitem{AHLMO}
  E. Airta, T. Hyt\"onen, K. Li, H. Martikainen, T. Oikari, 
  Off-diagonal estimates for bi-commutators. Int. Math. Res. Not. IMRN 2022, no. 23, 18766--18832. 
  
  \bibitem{BGLW}
 T. Bongers, Z. Guo, J. Li, B.D. Wick, Commutators of Hilbert transforms along monomial curves,
  Studia Math. 257 (2021), no. 3, 295--311.
  
  \bibitem{CLMS}
  R. Coifman, P.-L. Lions, Y. Meyer, S. Semmes, Compensated compactness and Hardy spaces, J.
Math. Pures Appl. 9 (1993) 247--286.

\bibitem{CRW}
R. Coifman, R. Rochberg, G. Weiss, Factorization theorems for Hardy spaces in several variables,
Ann. Math. (2) 103(1976) 611--635.


  
  \bibitem{Hyt18}
T. Hyt\"onen,   The $L^p$-to-$L^q$ boundedness of commutators with applications to the Jacobian operator. J. Math. Pures Appl. (9) 156 (2021), 351--391.

\bibitem{LOR19}
A. Lerner, S. Ombrosi, I.P. Rivera-R\'ios, 
Commutators of singular integrals revisited. Bull. Lond. Math. Soc. 51 (2019), no. 1, 107--119.

  
\bibitem{LMO}
K. Li, H. Martikainen, T. Oikari, Curved commutators in the plane, Math. Ann. 395 (2026), no. 3, Paper No. 66, 36 pp.

\bibitem{LMV}
K. Li, H. Martikainen, E. Vuorinen, Bilinear Calder\'on-Zygmund theory on product spaces. J. Math. Pures Appl. (9) 138 (2020), 356--412.

\bibitem{Lind}
S. Lindberg, On the Hardy space theory of compensated compactness quantities, Arch. Ration.
Mech. Anal. 224 (2017) 709--742.

\bibitem{O22}
T. Oikari, Lower bound of the parabolic Hilbert commutator, Adv. Math. 404 (2022), 108451.

\end{thebibliography}
\end{document}